\documentclass[reqno, 12pt]{amsart}

\usepackage{amsmath, amssymb, amsthm} 
\usepackage{hyperref} 
\usepackage[dvipdfmx]{graphicx}

\newenvironment{parts}[0]{%
  \begin{list}{}%
    {\setlength{\itemindent}{0pt}
     \setlength{\labelwidth}{1.5\parindent}
     \setlength{\labelsep}{.5\parindent}
     \setlength{\leftmargin}{2\parindent}
     \setlength{\itemsep}{0pt}
     }%
   }%
  {\end{list}}
\newcommand{\Part}[1]{\item[\upshape#1]}

\allowdisplaybreaks[2] 

\newcommand{\N}{\mathbb{N}} 
\newcommand{\Z}{\mathbb{Z}} 
\newcommand{\Q}{\mathbb{Q}} 
\newcommand{\QB}{\overline{\mathbb{Q}}} 
\newcommand{\R}{\mathbb{R}}

\renewcommand{\P}{\mathfrak{P}} 
\newcommand{\Df}{\mathfrak{D}} 

\newcommand{\p}{\mathfrak{p}}

\newcommand{\g}{\gamma}

\newcommand{\eg}{\textit{e.g.}}
\newcommand{\f}{\frac}

\renewcommand{\t}{\text} 
\renewcommand{\l}{\left} 
\renewcommand{\r}{\right} 

\renewcommand{\O}{\mathcal{O}} 
\renewcommand{\d}{\delta}

\newtheorem{Theorem}{Theorem}[section] 
\newtheorem{Proposition}[Theorem]{Proposition} 
\newtheorem{Lemma}[Theorem]{Lemma} 
\newtheorem{Corollary}[Theorem]{Corollary} 
\newtheorem{Fact}[Theorem]{Fact} 

\theoremstyle{definition}
\newtheorem{Definition}[Theorem]{Definition} 
\newtheorem{Remark}[Theorem]{Remark}

\newtheorem*{acknowledgement}{Acknowledgements}
\theoremstyle{plain}

\title[Relative Northcott numbers]{Relative Northcott numbers for the weighted Weil heights} 
\author[M. Okazaki]{Masao Okazaki} 
\address{Graduate School of Mathematics \\ 
Kyushu University \\ 
Motooka 744, Nishi-ku, Fukuoka 819-0395 \\ 
Japan.} 
\email{m-okazaki@math.kyushu-u.ac.jp} 
 
\subjclass[2020]{Primary 11G50} 
\keywords{relative Bogomolov extension, Northcott number, weighted Weil height}

\begin{document}
\begin{abstract} 
It is fundamental in number theory to calculate lower bounds for height functions. 
Grizzard studied lower bounds for the Weil height in a relative setting. 
Vidaux and Videla introduced the Northcott number for a set $A\subset\overline{\mathbb{Q}}$. 
It bounds the Weil height on $A$ from below, outside the zero-height points and the finitely many small-height points. 
Pazuki, Technau, and Widmer introduced the weighted Weil heights. 
These heights generalize both the absolute and relative Weil heights. 
In this paper, we introduce a relative version of the Northcott number related to the weighted Weil height. 
We also give a field extension whose Northcott number equals a given positive number. 
The work is a relative version of the previous work of the author and Sano on the Northcott numbers for the weighted Weil heights. 
\end{abstract}

\maketitle

\section{Introduction}\label{Intro} 

Let $h:\QB\rightarrow\R_{\geq0}$ be the absolute logarithmic Weil height (see, \eg, \cite[p.16]{BG}). 
For $\g\in\R$ and $a\in\QB$, we set 
\[ 
h_\g(a):=\deg(a)^\g h(a), 
\] 
where $\deg(a):=[\Q(a):\Q]$. 
The function $h_\g$ is called {\it $\g$-weighted Weil height}, introduced in \cite{PTW}. 
We note that $h_0$ (resp. $h_1$) is the absolute (resp. relative) logarithmic Weil height. 

\begin{Definition}[{\cite[Section 1]{BZ} or \cite[Section 1]{PTW}}] 
We say that a subset $A\subset\QB$ has the {\it $\g$-Bogomolov property} (resp. {\it $\g$-Northcott property}) if the set $\l\{ a\in A \mid 0<h_\g(a)<C \r\}$ (resp. $\l\{ a\in A \mid h_\g(a)<C \r\}$) is finite for some (resp. for all) $C>0$. 
We denote $\g$-Bogomolov property (resp. $\g$-Northcott property) by $\g$-(B) (resp. $\g$-(N)) for short. 
\end{Definition} 

\begin{Remark} 
An algebraic number $a\in\QB$ satisfies that $h_\g(a)=0$ if and only if $a$ is $0$ or a root of unity 
(see, \eg, \cite[p.17, Theorem 1.5.9]{BG}). 
\end{Remark} 

\begin{Remark}\label{interval} 
For a subset $A\subset\QB$, the property $\g$-(B) of $A$ immediately follows from $\g$-(N) of $A$. 
Hence if we set 
\begin{align*} 
I_B(A)&:=\l\{ \g\in\R \mid A\t{ has }\g\t{-(B)} \r\} \t{ and} \\
I_N(A)&:=\l\{ \g\in\R \mid A\t{ has }\g\t{-(N)} \r\}, 
\end{align*} 
then $I_B(A)\supset I_N(A)$ holds. 
By definition, we know that both $I_B(A)$ and $I_N(A\setminus\{\t{root of unity}\})$ are interval $(\g,\infty)$ or $[\g,\infty)$ for some $\g\in\R\cup\{-\infty\}$. 
As we will see in Lemma \ref{weak North}, the equality $\inf I_B(A)=\inf I_N(A\setminus\{\t{root of unity}\})$ holds. 
\end{Remark} 

That $A\subset\QB$ has $\g$-(B) means that the values of $h_\g$ on $A$ are bounded from below by an absolute positive constant, outside the zero-height points. 
Any number field has $\g$-(N) for all $\g\in\R$. 
This is an immediate consequence of the Northcott theorem (see, \eg, \cite[p.25, Theorem 1.6.8]{BG}). 
On the one hand, there are two previous works on infinite extensions of $\Q$ having $\g$-(B) or $\g$-(N) (see \cite{OS} and \cite{PTW}). 
However, there are many examples of infinite extensions having $0$-(B) or $0$-(N) (see, \eg, \cite{AD}, \cite{BZ}, \cite{Ha}, \cite{Schi}, or \cite{Wi}). 
Seeing the definition of $\g$-(B), we want to set 
\[
{\rm Nor}_\g(A):=\inf\l\{ C>0 \mid \#\l\{ a\in A \mid h_\g(a)<C \r\}=\infty \r\}. 
\] 
The non-negative number ${\rm Nor}_\g(A)$ is called the {\it $\g$-Northcott number} of $A$, introduced in \cite{OS} and \cite{VV}. 
Note that $A$ has $\g$-(B) (resp. $\g$-(N)) if and only if ${\rm Nor}_\g(A\setminus\{\t{root of unity}\})>0$ (resp. ${\rm Nor}_\g(A)=\infty$). 
It is natural to try to construct a field whose $\g$-Northcott number equals a given positive number. 
The problem was first dealt with in \cite[Theorem 3]{PTW} and later solved in \cite[Theorem 4.1]{OS}. 
In this paper, we focus on the problem in a ``relative'' setting. 
We introduce the following: 

\begin{Definition} 
Let $L/K$ be an extension of subfields of $\QB$. 
We say that $L/K$ is a {\it relative $\g$-Bogomolov extension} (resp. {\it relative $\g$-Northcott extension}) if the set $L\setminus K$ has $\g$-(B) (resp. $\g$-(N)). 
We denote relative $\g$-Bogomolov (resp. relative $\g$-Northcott) by $\g$-(RB) (resp. $\g$-(RN)) for short. 
\end{Definition} 

\begin{Remark} 
The $0$-(RB) extension is the {\it relative Bogomolov extension} introduced in \cite{Gri}. 
\end{Remark} 

\begin{Remark}\label{nontrivial} 
If a field $L\subset\QB$ has $\g$-(B) (resp. $\g$-(N)), then $L/K$ is $\g$-(RB) (resp. $\g$-(RN)) for any field $K\subset L$. 
We say that a $\g$-(RB) (resp. $\g$-(RN)) extension $L/K$ is {\it trivial} if $L\subset\QB$ has $\g$-(B) (resp. $\g$-(N)). 
By definition, if $K$ has $\g$-(B) (resp. $\g$-(N)), then any $\g$-(RB) (resp. $\g$-(RN)) extension $L/K$ is trivial. 
Thus we should restrict our attention to the case that $K$ does not have $\g$-(B) (resp. $\g$-(N)). 
Here note that the Lehmer conjecture asserts that $\QB$ has $1$-(B). 
On the one hand, we know that the set $\QB\setminus\{\t{root of unity}\}$ has $\g$-(N) for all $\g>1$ (see \cite[Remark 2.6]{OS}). 
In addition, for all $\g<0$ (resp. $\g\leq 0$), we will see in Section \ref{negative} that any $\g$-(RB) (resp. $\g$-(RN)) extension is trivial. 
Hence we deal with $\g$-(RB) (resp. $\g$-(RN)) only in the case $0\leq\g<1$ (resp. $0<\g\leq1$). 
\end{Remark} 

Now we set $I_B(L/K):=I_B(L\setminus K)$, $I_N(L/K):=I_N(L\setminus K)$, and ${\rm Nor}_\g(L/K):={\rm Nor}_\g(L\setminus K)$. 
We call ${\rm Nor}_\g(L/K)$ {\it relative $\g$-Northcott number} of $L/K$. 
We also want to construct an extension $L/K$ such that $K$ does not has $\g$-(B) and $\t{Nor}_\g(L/K)$ equals a given positive number. 
Hence our result is the following: 

\begin{Theorem}\label{implicit} 
Let $c\in\R_{>0}$. 
Then we can construct an extension $L/K$ of subfields of $\QB$ satisfying the equality $I_N(K)=(1,\infty)$ and the following condition {\rm (1)}, {\rm (2)}, or {\rm (3)}, respectively. 
\begin{parts} 
\Part{(1)} 
\hypertarget{main1} 
$I_B(L/K)=I_N(L/K)=(\g,\infty)$ for $\g\in[0,1)$.

\Part{(2)} 
\hypertarget{main2} 
$I_B(L/K)=[\g,\infty)\supsetneq(\g,\infty)=I_N(L/K)$ with ${\rm Nor}_\g(L/K)=c$ for $\g\in[0,1)$. 

\Part{(3)} 
\hypertarget{main3} 
$I_B(L/K)=I_N(L/K)=[\g,\infty)$ for $\g\in(0,1]$. 

\end{parts} 
\end{Theorem}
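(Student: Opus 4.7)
My plan is to construct the pair $(L,K)$ explicitly by adjoining carefully chosen radicals, adapting the strategy of \cite{OS} to the relative setting. The overall scheme is to first fix a common base field $K$ with $I_N(K) = (1,\infty)$, and then, for each of the three cases, adjoin a sequence of new algebraic numbers $\alpha_i$ to produce $L = K(\alpha_i : i \in \N)$ with the prescribed boundary behaviour.

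For the base field, I would take $K$ of the form $K = \Q(q_i^{1/m_i} : i \in \N)$ for a suitable sequence of primes $q_i$ and integers $m_i$. The key identity $h_\g(q^{1/m}) = m^{\g - 1}\log q$ controls the $\g$-weighted height of each generator; for an appropriate choice (e.g.\ $m_i \to \infty$ with $q_i = p_i$ the $i$th prime), the generators have $h_\g \to \infty$ for $\g > 1$ and bounded $h_1$, giving $I_N(K) = (1,\infty)$ on the generators. Extending this behaviour to the whole compositum requires a technical step controlling local heights at each finite place, which is essentially available from the calculations in \cite{OS}.

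For the relative part, I would choose a separate family of primes $p_i$ disjoint from the primes appearing in $K$, and set $\alpha_i = p_i^{1/n_i}$ with $n_i$ adapted to each case. The same formula $h_\g(\alpha_i) = n_i^{\g - 1}\log p_i$ drives the boundary analysis. In case \hyperlink{main1}{(1)}, I would choose $n_i$ and $p_i$ so that $h_\g(\alpha_i) \to 0$ while $h_{\g'}(\alpha_i) \to \infty$ for $\g' > \g$. In case \hyperlink{main2}{(2)}, I would arrange that the infimum of $\{h_\g(\alpha_i)\}$ equals $c$ while the sequence is bounded below and $h_{\g'}(\alpha_i)\to\infty$ for $\g' > \g$, so that Bogomolov holds at $\g$ but Northcott fails with $\mathrm{Nor}_\g(L/K)=c$. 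In case \hyperlink{main3}{(3)}, I would impose $h_\g(\alpha_i) \to \infty$ already at the boundary exponent $\g$, forcing $\g \in I_N(L/K)$.

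The main obstacle, and the genuinely new feature compared to the absolute setting of \cite{OS}, is obtaining the correct lower bound on $h_\g$ for an \emph{arbitrary} element $a \in L \setminus K$, not merely the generating $\alpha_i$. One must rule out ``small'' elements of $L$ lying outside $K$ whose $\g$-weighted heights beat those of every generator. I would handle this by arranging the prime supports of $K$ and of the family $\{\alpha_i\}$ to be disjoint and exploiting Kummer-theoretic independence of the radicals: the minimal polynomial of any $a \in L \setminus K$ must then involve at least one new prime $p_i$ nontrivially, and the local height contribution at that $p_i$, combined with a degree estimate, yields the needed lower bound. Once this non-triviality argument is in place, the three cases follow by arithmetic with the explicit heights of the $p^{1/n}$'s.
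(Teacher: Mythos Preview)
Your overall architecture is right and matches the paper's: fix $K$ with $I_N(K)=(1,\infty)$, then adjoin radicals with controlled heights. However, the step you flag as ``the main obstacle'' is precisely where your proposal has a genuine gap, and the tool you suggest does not do the job.

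You propose to bound $h_\g(a)$ below for arbitrary $a\in L\setminus K$ by arguing that the minimal polynomial of $a$ ``involves'' some new prime $p_i$ and then reading off a local height contribution at $p_i$. This fails: an element of $K(p^{1/n})\setminus K$ can have \emph{trivial} local height at every place above $p$. For instance $a=1+p^{1/n}$ has $|a|_v=1$ at all $v\mid p$ (ultrametric inequality) and at all archimedean places $|a|$ is close to $1$, so the contribution from $p$ to $h(a)$ is zero. Kummer independence tells you $a\notin K$, but it does not by itself produce a height lower bound.

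The paper handles this with a completely different mechanism: Silverman's inequality, which bounds $h(a)$ below for any generator $a$ of $L/K$ in terms of the relative discriminant $D_{L/K}$. One arranges the extensions $L_{(i,j)}/L_{(i-1,j)}$ to have prime degree $d_i$ (so every $a$ outside the smaller field is a generator) and to have large discriminant by forcing two primes $p_i,q_i$ to ramify totally (this is why the paper adjoins $(p_i/q_i)^{1/d_i}$ rather than $p_i^{1/n_i}$: it makes both $X^{d_i}-p_iq_i^{d_i-1}$ and $X^{d_i}-p_i^{d_i-1}q_i$ Eisenstein). This yields $h(a)>\log(p_i)/d_i-\log(d_i)/(2(d_i-1))$ uniformly for all $a\in L_i\setminus L_{i-1}$, which is exactly the input needed for the three cases.

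A second, smaller gap: your justification of $I_N(K)=(1,\infty)$ is only at the level of generators and you defer the rest to \cite{OS}. Controlling the whole compositum is not automatic; the paper takes $K=\Q(2^{1/3^j}:j\in\N)$ and invokes Amoroso's theorem \cite{Amo} to get $1$-(B), which is a nontrivial input you would also need or replace.
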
 

\begin{Remark} 
Regarding Remark \ref{nontrivial}, we note that the condition $I_N(K)=(1,\infty)$ implies that $K$ does not have $\g$-(B) (resp. $\g$-(N)) for all $\g\in\R_{<1}$ (resp. $\g\in\R_{\leq1}$) by Remark \ref{interval}. 
\end{Remark}

\section{Preparations}\label{Pre} 

This section is devoted to giving technical lemmata to prove Theorem \ref{implicit}. 
The discussions are greatly based on those in \cite{OS}, \cite{PTW}, and \cite{Wi}. 
First, we recall (a special case of) Silverman's inequality. 

\begin{Fact}[{\cite[Theorem 2]{Si84}}]\label{Siiq} 
Let $K$ be a number field. 
Assume that $a\in \QB$ satisfies the inequality $d:=[K(a):K]>1$. 
We set $L:=K(a)$. 
Then the inequality 
\[
h(a)\geq \f{1}{2(d-1)}\l(\f{\log(N_{K/\Q}(D_{L/K}))}{d[K:\Q]}-\log(d)\r) 
\] 
holds, where $N_{K/\Q}$ is the usual norm and $D_{L/K}$ is the relative discriminant ideal of the extension $L/K$. 
\end{Fact}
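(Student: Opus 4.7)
The plan is to prove Silverman's inequality via the classical Vandermonde–Hadamard argument. The strategy is twofold: bound $N_{K/\Q}(D_{L/K})$ from above by $|N_{K/\Q}(\D)|$, where $\D$ is the discriminant of the minimal polynomial of $a$, using the relation between relative discriminants and the discriminant of an integral power basis; and bound $|\D|_v$ at each archimedean place $v$ of $K$ in terms of the conjugates of $a$ via Hadamard's inequality applied to a Vandermonde determinant. The crucial feature of Hadamard's estimate is that it delivers the sharp constant $d^d$ (hence $\log d$ in the final inequality), which is what the statement requires.

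Assume first that $a\in\O_L$. Let $f\in K[X]$ denote the minimal polynomial of $a$ over $K$, with conjugates $\beta_1=a,\ldots,\beta_d\in\QB$, and set $\D:=\t{disc}(f)=\prod_{i<j}(\beta_i-\beta_j)^2\in K^\times$. Since $\{1,a,\ldots,a^{d-1}\}$ is a $K$-basis of $L$ lying in $\O_L$, the formula $\t{disc}(\O_K[a]/\O_K)=[\O_L:\O_K[a]]^2\cdot D_{L/K}$ gives the divisibility $D_{L/K}\mid\D\,\O_K$, hence $N_{K/\Q}(D_{L/K})\leq|N_{K/\Q}(\D)|$. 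For each archimedean place $v$ of $K$ (with a fixed extension to $\QB$), write $\D=\det(V)^2$ where $V=(\beta_i^{j-1})_{1\leq i,j\leq d}$ is the Vandermonde matrix. Hadamard's inequality combined with the elementary estimate $\sum_{j=0}^{d-1}x^{2j}\leq d\max(1,x)^{2(d-1)}$ for $x\geq 0$ gives
\[
|\D|_v\leq d^d\prod_{i=1}^d\max(1,|\beta_i|_v)^{2(d-1)}.
\]
Taking logarithms, multiplying by $n_v:=[K_v:\R]$, and summing over archimedean $v$ (using $\sum_{v\mid\infty}n_v=[K:\Q]$ and $\sum_{v\mid\infty}n_v\sum_i\log^+|\beta_i|_v\leq[L:\Q]\,h(a)=d[K:\Q]\,h(a)$, since the left side equals the archimedean part of $[L:\Q]\,h(a)$), one obtains
\[
\log|N_{K/\Q}(\D)|\leq d[K:\Q]\log d+2d(d-1)[K:\Q]\,h(a).
\]
Combining with $N_{K/\Q}(D_{L/K})\leq|N_{K/\Q}(\D)|$ and rearranging yields the stated inequality.

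The main technical point is the restriction $a\in\O_L$: for general $a\notin\O_L$, the power basis is not integral and the divisibility $D_{L/K}\mid\D\,\O_K$ can fail (naively multiplying $a$ by a denominator $c\in\O_K$ to reach $\O_L$ only gives a weaker bound, since $h(ca)\leq h(c)+h(a)$ enters with the wrong sign). Two standard ways to handle this are: refine the reduction by choosing an integral $K$-basis of $L$ more cleverly than the power basis (for instance via the Fröhlich-type basis built from the coefficients of $f$), or argue locally at each prime $\p$ of $\O_K$, bounding $v_\p(D_{L/K})$ via the different ideal $\mathfrak{d}_{L/K}$ (which divides $f'(a)\O_L$ when $a\in\O_L$) and summing over primes. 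The Vandermonde/Hadamard estimate at archimedean places --- which delivers the sharp constant $\log d$ rather than the weaker $\log 2$ from a naive triangle-inequality bound $|\beta_i-\beta_j|\leq 2\max(|\beta_i|,|\beta_j|)$ --- is the heart of the argument and is where the $\log d$ in the statement ultimately comes from.
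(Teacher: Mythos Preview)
The paper does not prove this statement at all: it records Silverman's inequality as a \emph{Fact} with a citation to \cite[Theorem~2]{Si84} and then uses it as a black box in Corollary~\ref{Lower bound}. So there is no ``paper's own proof'' to compare against; your write-up is essentially a reconstruction of Silverman's original argument, and the Vandermonde--Hadamard estimate you give at the archimedean places is exactly the mechanism that produces the $\log d$ term. For $a\in\O_L$ your argument is correct as written.

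The genuine gap is the one you yourself flag: the passage from $D_{L/K}\mid \D\,\O_K$ to the final bound uses $\O_K[a]\subset\O_L$, which fails for non-integral $a$, and neither of the two fixes you sketch is actually carried out. This matters for the paper's purposes: in Corollary~\ref{Lower bound} the inequality is applied to an \emph{arbitrary} $a\in L\setminus K$ with $L=K((p/q)^{1/d})$, and such $a$ need not lie in $\O_L$. The clean way to close the gap (and what Silverman does) is to combine the archimedean Hadamard bound with the non-archimedean estimate $|\D|_v\le \prod_i \max(1,|\beta_i|_v)^{2(d-1)}$, valid by the ultrametric inequality, and then invoke the product formula for $\D\in K^\times$ together with a local comparison between $v_\p(\D)$ and $v_\p(D_{L/K})$; the point is that the non-archimedean ``excess'' $\sum_i\log^+|\beta_i|_v$ is exactly the non-archimedean contribution to $d[K:\Q]\,h(a)$, so it is absorbed into the height rather than lost. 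Your second sketched route (local differents) leads to the same place, but note that the parenthetical ``which divides $f'(a)\O_L$ when $a\in\O_L$'' again hides an integrality hypothesis, so as stated it does not yet resolve the issue.
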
 

For a number field $K$, we denote by $\O_K$ the ring of integers of $K$. 
To give a more explicit lower bound for the Weil height by using Fact \ref{Siiq}, we must estimate $N_{K/\Q}(D_{L/K})$ from below. 
In \cite[Proof of Theorem 4]{Wi}, Widmer gave the following nice tool to do it. 

\begin{Lemma}\label{ramify} 
Let $p$ be a prime number and $L/K$ be an extension of number fields. 
Assume that $p$ does not ramify in $K$ and that any prime ideal of $\O_K$ lying above $p$ ramifies totally in $L$. 
Then we have 
\[
p^{[K:\Q]([L:K]-1)} \mid N_{K/\Q}(D_{L/K}). 
\]
\end{Lemma}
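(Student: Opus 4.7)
The plan is to use the standard relation $D_{L/K}=N_{L/K}(\mathfrak{d}_{L/K})$ between the relative discriminant and the relative different, together with the well-known fact that in a totally ramified extension of local fields of degree $n$, the different has valuation at least $n-1$. I will track the $\mathfrak{p}$-adic valuations of $D_{L/K}$ for each prime $\mathfrak{p}$ of $\mathcal{O}_K$ above $p$, then push the resulting divisibility down to $\mathbb{Z}$ by taking $N_{K/\mathbb{Q}}$.

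First, since $p$ is unramified in $K$, I would factor $p\mathcal{O}_K=\mathfrak{p}_1\cdots\mathfrak{p}_g$ into distinct prime ideals, with residue degrees $f_i:=f(\mathfrak{p}_i/p)$ satisfying $\sum_{i=1}^g f_i=[K:\mathbb{Q}]$. By hypothesis each $\mathfrak{p}_i$ is totally ramified in $L$, so that $\mathfrak{p}_i\mathcal{O}_L=\mathfrak{P}_i^n$ with $n:=[L:K]$ and with $f(\mathfrak{P}_i/\mathfrak{p}_i)=1$.

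Next, I would invoke the local characterization of the different (see, \textit{e.g.}, Serre, \emph{Local Fields}, Chapter III): for a totally ramified extension of degree $n$, the $\mathfrak{P}_i$-adic valuation of the relative different satisfies $v_{\mathfrak{P}_i}(\mathfrak{d}_{L/K})\geq n-1$ (with equality in the tame case and strict inequality in the wild case). Taking the norm $N_{L/K}$ and using $f(\mathfrak{P}_i/\mathfrak{p}_i)=1$, I obtain $v_{\mathfrak{p}_i}(D_{L/K})\geq n-1$ for every $i$.

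Finally, I would take $N_{K/\mathbb{Q}}$ of the divisibility $\prod_{i=1}^g\mathfrak{p}_i^{n-1}\mid D_{L/K}$. Since $N_{K/\mathbb{Q}}(\mathfrak{p}_i)=p^{f_i}$, this yields
\[
p^{(n-1)\sum_{i=1}^g f_i}\mid N_{K/\mathbb{Q}}(D_{L/K}),
\]
and $\sum_{i=1}^g f_i=[K:\mathbb{Q}]$ gives the claim. There is no real obstacle here; the only nontrivial ingredient is the local lower bound $v_{\mathfrak{P}_i}(\mathfrak{d}_{L/K})\geq e-1$, which is classical, and the proof is otherwise a bookkeeping of norms and valuations.
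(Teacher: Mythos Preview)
Your proof is correct and follows essentially the same approach as the paper: both arguments use the relation $D_{L/K}=N_{L/K}(\mathfrak{d}_{L/K})$ together with the classical lower bound $v_{\mathfrak{P}}(\mathfrak{d}_{L/K})\geq e-1$, and then push the resulting divisibility down to $\mathbb{Z}$ via norms. The only cosmetic difference is that the paper composes norms as $N_{K/\mathbb{Q}}\circ N_{L/K}=N_{L/\mathbb{Q}}$ and computes $N_{L/\mathbb{Q}}\bigl((\mathfrak{P}_1\cdots\mathfrak{P}_g)^{n-1}\bigr)$ directly (avoiding explicit mention of the residue degrees $f_i$), whereas you take the two norms in sequence and use $\sum_i f_i=[K:\mathbb{Q}]$; the content is identical.
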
 

\begin{proof} 
Let $\Df_{L/K}$ be the different ideal of $L/K$. 
We employ the following two facts. 
\begin{parts} 
\Part{(2.1)}\hypertarget{2.1}{} 
Let $\P\subset\O_L$ be a prime ideal and $e\in\N$ be the ramification index of $\P$ over $K$. 
Then it holds that $\P^{e-1} \mid \Df_{L/K}$ (see, \eg, \cite[p.199, (2.6)]{Ne}). 

\Part{(2.2)}\hypertarget{2.2}{} 
It holds that $D_{L/K}=N_{L/K}(\Df_{L/K})$ (see, \eg, \cite[p.201, (2.9)] {Ne}). 

\end{parts} 
By the assumption that $p$ does not ramify in $K$, the prime decomposition of $p\O_K$ in $\O_K$ is a form of 
\[
p\O_K=\p_1\cdots\p_s, 
\label{2.3}\tag{2.3}
\] 
where $\p_i\neq\p_j$ if  $i\neq j$. 
Now for each $1\leq i\leq s$, there exists a prime ideal $\P_i\subset\O_L$ such that the prime decomposition of $\p_i\O_L$ in $\O_L$ is 
\[
\p_i\O_L=\P_i^{[L:K]}. 
\label{2.4}\tag{2.4}
\]
By (\hyperlink{2.1}{2.1}), we know that $\P_i^{[L:K]-1} \mid \Df_{L/K}$ for each $1\leq i\leq s$. 
Thus we have 
\[
(\P_1\cdots\P_s)^{[L:K]-1} \mid \Df_{L/K}. 
\] 
On the one hand, by (\hyperlink{2.2}{2.2}), it holds that 
\[
N_{K/\Q}(D_{L/K})=N_{K/\Q}(N_{L/K}(\Df_{L/K}))=N_{L/\Q}(\Df_{L/K}). 
\]
Therefore, we have 
\[
N_{L/\Q}\l((\P_1\cdots\P_s)^{[L:K]-1}\r) \mid N_{L/\Q}(\Df_{L/K})=N_{K/\Q}(D_{L/K}). 
\label{2.5}\tag{2.5} 
\] 
Now we know that 
\begin{align*} 
N_{L/\Q}\l((\P_1\cdots\P_s)^{[L:K]-1}\r) 
&=\l(N_{L/\Q}\l(\P_1^{[L:K]}\cdots\P_s^{[L:K]}\r)\r)^{\f{[L:K]-1}{[L:K]}} \\
&=\l(N_{L/\Q}((\p_1\cdots\p_s)\O_L)\r)^{\f{[L:K]-1}{[L:K]}} && \text{by }(\ref{2.4}) \\
&=\l(N_{L/\Q}(p\O_L)\r)^{\f{[L:K]-1}{[L:K]}} && \text{by }(\ref{2.3}) \\
&=\l(p^{[L:K][K:\Q]}\r)^{\f{[L:K]-1}{[L:K]}} \\
&=p^{[K:\Q]([L:K]-1)}. 
\end{align*} 
Combining (\ref{2.5}), we have completed the proof. 
\end{proof} 

For a number field $K$ and a prime ideal $\p\subset\O_K$, we say that $X^n+a_{n-1}X^{n-1}+\cdots+a_0\in\O_K[X]$ is a $\p$-Eisenstein polynomial if $a_i\in\p$ for all $0\leq i\leq n-1$ and $a_0\notin\p^2$. 

\begin{Fact}[{\eg, \cite[p.133, Theorem 24 (a)]{FT}}]\label{Eisenstein} 
Let $L/K$ be an extension of number fields and $\p\in\O_K$ be a prime ideal. 
Then any $\p$-Eisenstein polynomial is irreducible in $K[X]$. 
Furthermore, the following two conditions are equivalent. 
\begin{parts} 
\Part{(1)} 
$\p$ ramifies totally in $L$. 
\Part{(2)}  
$L=K(\rho)$, where $\rho$ is a root of some $\p$-Eisenstein polynomial $g(X)\in\O_K[X]$. 
\end{parts} 
\end{Fact}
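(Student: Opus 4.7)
The result has two parts---irreducibility of $\p$-Eisenstein polynomials and the equivalence (1)$\iff$(2)---which I would handle in that order. For irreducibility, I would mimic the classical Eisenstein criterion inside the DVR $\O_{K,\p}$: if a $\p$-Eisenstein polynomial $g(X)=X^n+a_{n-1}X^{n-1}+\cdots+a_0$ factored as $g=fh$ with non-constant $f,h\in K[X]$, then by Gauss's lemma over $\O_{K,\p}$ one may take $f,h$ monic in $\O_{K,\p}[X]$; reducing modulo $\p$ gives $X^n=\bar f(X)\bar h(X)$ in the polynomial ring over the field $\O_K/\p$, forcing $\bar f=X^s$ and $\bar h=X^{n-s}$ with $1\leq s\leq n-1$. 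Then $f(0),h(0)\in\p\O_{K,\p}$, hence $a_0=f(0)h(0)\in\p^2\O_{K,\p}$, contradicting $a_0\notin\p^2$.

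For (2)$\Rightarrow$(1): let $\rho$ be a root of a $\p$-Eisenstein polynomial $g$ of degree $n$; by the irreducibility just proved, $[L:K]=[K(\rho):K]=n$. Take any prime $\P\subset\O_L$ above $\p$ with ramification index $e=e(\P|\p)$, and extend the normalized $v_\p$ to $L$ by $w:=v_\P/e$. From $\rho^n=-\sum_{i=0}^{n-1}a_i\rho^i$ together with $w(a_0)=1$ and $w(a_i)\geq 1$, the Newton polygon of $g$ with respect to $v_\p$ consists of a single segment of slope $-1/n$, forcing $w(\rho)=1/n$. Hence $n\mid e$; combined with $e\leq[L:K]=n$ this yields $e=n$. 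The fundamental identity $\sum_{\P'\mid\p}e(\P'|\p)f(\P'|\p)=[L:K]=n$ then forces $\P$ to be the unique prime above $\p$ with inertia degree $1$, so $\p$ ramifies totally in $L$.

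For (1)$\Rightarrow$(2): assume $\p$ is totally ramified in $L$, with unique prime $\P$ of ramification index $n:=[L:K]$. Pick any $\rho\in\O_L$ with $v_\P(\rho)=1$. Passing to completions, the standard local fact that any uniformizer generates a totally ramified local extension gives $K_\p(\rho)=L_\P$, so $[K(\rho):K]\geq[K_\p(\rho):K_\p]=n=[L:K]$, forcing $K(\rho)=L$. The minimal polynomial $g(X)\in\O_K[X]$ of $\rho$ over $K$ then has degree $n$; since $\rho$ is a uniformizer in the totally ramified local extension $L_\P/K_\p$, its minimal polynomial over $K_\p$ is Eisenstein, and matching degrees forces this to coincide with $g$, which is therefore $\p$-Eisenstein. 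The main step I expect to require care---showing that an arbitrary uniformizer at $\P$ already generates $L$ over $K$---reduces cleanly to the local statement via completions; once that passage is granted, the Newton polygon and valuation bookkeeping in both directions is routine.
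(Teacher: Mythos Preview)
The paper does not prove this statement at all: it is recorded as a \emph{Fact} with a reference to \cite[p.~133, Theorem~24(a)]{FT} and is simply quoted for later use. Your proposal therefore supplies strictly more than the paper does. The argument you outline is the standard one (and is essentially what one finds in the cited reference): Gauss's lemma plus reduction modulo~$\p$ for irreducibility, the Newton polygon/valuation computation for (2)$\Rightarrow$(1), and the passage to completions together with the local fact that a uniformizer generates a totally ramified extension and has Eisenstein minimal polynomial for (1)$\Rightarrow$(2). All three steps are correct as written; the only point deserving a word of justification is that the minimal polynomial of $\rho$ over $K_\p$, which is Eisenstein for the maximal ideal of $\O_{K_\p}$, automatically has its coefficients in $\p$ (resp.\ constant term outside $\p^2$) once you know those coefficients already lie in $\O_K$, but this is immediate from comparing $\p$-adic valuations.
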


\begin{Corollary}\label{Lower bound} 
Let $d$, $p$, and $q$ be prime numbers with $p<q$. 
Assume that positive integers $e_1, r_1, \ldots, e_j, r_j$ satisfy that $p\nmid\prod_{n=1}^j(e_nr_n)$ and $q\nmid\prod_{n=1}^j(e_nr_n)$.
We set $K:=\Q(r_n^{1/e_n} \mid 1\leq n\leq j)$ and $L:=K((p/q)^{1/d})$. 
Then the inequality 
\[ 
h(a) > \f{\log(p)}{d}-\f{\log(d)}{2(d-1)} 
\] 
holds for all $a\in L\setminus K$. 
\end{Corollary}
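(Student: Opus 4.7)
The plan is to apply Silverman's inequality (Fact \ref{Siiq}) to $L/K$, after showing that the relative discriminant $D_{L/K}$ is divisible by a large power of $pq$. The key observation is that $L$ admits two convenient generators over $K$: the element $(pq^{d-1})^{1/d}=q(p/q)^{1/d}$ with minimal polynomial $X^d-pq^{d-1}$, and the element $(qp^{d-1})^{1/d}=p(p/q)^{-1/d}$ with minimal polynomial $X^d-qp^{d-1}$; both lie in $L$ and generate $L$ over $K$.

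First I would check that neither $p$ nor $q$ ramifies in $K$: the polynomial discriminant of $X^{e_n}-r_n$ equals $\pm e_n^{e_n}r_n^{e_n-1}$, so a prime ramifying in $\Q(r_n^{1/e_n})/\Q$ must divide $e_n r_n$, and the coprimality hypothesis ensures that neither $p$ nor $q$ ramifies in any $\Q(r_n^{1/e_n})$, hence neither in the compositum $K$. Next, for a prime $\p\subset\O_K$ above $p$, unramifiedness and $p\neq q$ give $v_\p(pq^{d-1})=v_\p(p)+(d-1)v_\p(q)=1$, so $X^d-pq^{d-1}$ is $\p$-Eisenstein; by Fact \ref{Eisenstein} this simultaneously shows $[L:K]=d$ and that $\p$ is totally ramified in $L$. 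The symmetric argument with $X^d-qp^{d-1}$ shows that any prime of $\O_K$ above $q$ is totally ramified in $L$ as well.

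Now Lemma \ref{ramify} applied to $p$ and to $q$, together with the coprimality of $p$ and $q$, yields $(pq)^{[K:\Q](d-1)}\mid N_{K/\Q}(D_{L/K})$. Since $d$ is prime and $a\in L\setminus K$, we have $K(a)=L$, so Silverman's inequality applies with $[K(a):K]=d$ and gives
\[
h(a)\geq \f{1}{2(d-1)}\l(\f{[K:\Q](d-1)\log(pq)}{d[K:\Q]}-\log(d)\r)=\f{\log(pq)}{2d}-\f{\log(d)}{2(d-1)}.
\]
The hypothesis $p<q$ then yields $\log(pq)>2\log(p)$, delivering the desired strict inequality.

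The main obstacle is conceptual rather than computational: using only one Eisenstein generator produces a bound of order $\log(p)/(2d)$, losing a crucial factor of two. The key realization is that $L$ has two different natural Eisenstein generators over $K$---one witnessing total ramification at $p$, the other at $q$---and exploiting both simultaneously is necessary to push the estimate up to $\log(p)/d$.
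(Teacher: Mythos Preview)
Your proof is correct and follows essentially the same route as the paper's: show $p$ and $q$ are unramified in $K$, use the two Eisenstein polynomials $X^d-pq^{d-1}$ and $X^d-qp^{d-1}$ together with Fact~\ref{Eisenstein} to obtain total ramification above both $p$ and $q$, apply Lemma~\ref{ramify} to get $(pq)^{[K:\Q](d-1)}\mid N_{K/\Q}(D_{L/K})$, and finish with Silverman's inequality and $p<q$. The only cosmetic difference is that you justify non-ramification of $p,q$ in $K$ via the discriminant of $X^{e_n}-r_n$, whereas the paper invokes \cite{Fi} and \cite{Hi}.
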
 
\begin{proof} 
By \cite[Lemma 4.1]{Fi} and \cite[p.97, Theorem 85]{Hi}, we know that $p$ and $q$ do not ramify in $K$. 
Thus, by Fact \ref{Eisenstein}, any prime ideal $\p\subset\O_K$ lying above $p$ (resp. $q$) ramifies totally in $L$ since $L=K((pq^{d-1})^{1/d})$ (resp. $L=K((p^{d-1}q)^{1/d})$) holds and $X^d-pq^{d-1}\in\O_K[X]$ (resp. $X^d-p^{d-1}q\in\O_K[X]$) is a $\p$-Eisenstein polynomial. 
Therefore we have 
\[
p^{[K:\Q](d-1)} \mid N_{K/\Q}(D_{L/K}) \ \t{ and } \ 
q^{[K:\Q](d-1)} \mid N_{K/\Q}(D_{L/K}) 
\] 
by Lemma \ref{ramify}. 
Combining the assumption that $p<q$, we get the inequality 
\[
2\log(p)
< \log(pq)
\leq \f{\log(N_{K/\Q}(D_{L/K}))}{[K:\Q](d-1)}. 
\label{2.6}\tag{2.6} 
\] 
Now note that $K(a)=L$ holds since $a\in L\setminus K$ and 
\begin{align*} 
[L:K]
&=\deg(X^d-pq^{d-1}) && \t{by Fact \ref{Eisenstein}} \\
&=d 
\end{align*} 
is a prime number. 
Thus we conclude that 
\begin{align*} 
h(a) 
&\geq \f{1}{2(d-1)}\l(\f{\log(N_{K/\Q}(D_{L/K}))}{d[K:\Q]}-\log (d)\r) && \text{by Fact }\ref{Siiq} \\
&> \f{\log(p)}{d}-\f{\log(d)}{2(d-1)} && \t{by } (\ref{2.6}). 
\end{align*} 
This completes the proof.  
\end{proof}

\begin{Fact}[{\cite[Lemma 6]{PTW}}]\label{Northnumber} 
Let $\g\in\R$ and $A\subset\QB$. 
Assume that a nest sequence $A_0\subsetneq A_1\subsetneq A_2\subsetneq\cdots$ of subsets of $A$ satisfies that 
\begin{parts} 
\Part{(1)} 
$A_i$ has $\g$-{\rm(N)} for all $i\in\Z_{\geq 0}$ and 

\Part{(2)} 
$A=\bigcup_{i\in\Z_{\geq0}}A_i$. 

\end{parts} 
Then we have 
\[
{\rm Nor}_\g(A)=\liminf_{i\rightarrow\infty}\inf h_\g(A_i\setminus A_{i-1}). 
\]
\end{Fact}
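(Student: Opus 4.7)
The plan is to prove the stated equality by establishing two matching inequalities. Set $L:=\liminf_{i\to\infty}\inf h_\g(A_i\setminus A_{i-1})$, and note that the strict nesting of the $A_i$ together with condition (2) yields a disjoint decomposition $A=A_0\cup\bigcup_{i\geq1}(A_i\setminus A_{i-1})$, which is the structural input on which both inequalities will rest.

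For the direction ${\rm Nor}_\g(A)\geq L$, I fix an arbitrary $C<L$. By the characterization of $\liminf$ there exists $N\geq1$ with $\inf h_\g(A_i\setminus A_{i-1})>C$ for every $i\geq N$, so no element of the tail $\bigcup_{i\geq N}(A_i\setminus A_{i-1})$ contributes to the sublevel set $\{a\in A\mid h_\g(a)<C\}$. Hence this sublevel set is contained in $A_{N-1}$, which is finite by hypothesis (1), and so ${\rm Nor}_\g(A)\geq C$. Letting $C$ tend to $L$ from below yields ${\rm Nor}_\g(A)\geq L$.

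For the reverse inequality ${\rm Nor}_\g(A)\leq L$, I may assume $L<\infty$ (otherwise there is nothing to prove) and fix $C>L$. Then $\inf_{i\geq N}\inf h_\g(A_i\setminus A_{i-1})\leq L<C$ for every $N$, so there are infinitely many indices $i$ with $\inf h_\g(A_i\setminus A_{i-1})<C$; for each such $i$, I pick $a_i\in A_i\setminus A_{i-1}$ with $h_\g(a_i)<C$. Since the shells $A_i\setminus A_{i-1}$ are pairwise disjoint, these $a_i$ are distinct, so $\{a\in A\mid h_\g(a)<C\}$ is infinite and ${\rm Nor}_\g(A)\leq C$; letting $C$ approach $L$ from above completes the proof. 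No genuine obstacle arises: the argument is a direct unfolding of the definitions of $\liminf$ and of the Northcott number, and the only subtlety is the bookkeeping of strict versus non-strict inequalities, which is handled automatically by approaching $L$ strictly from each side via the parameter $C$.
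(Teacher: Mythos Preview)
Your proof is correct and is precisely the standard two-inequality argument one would expect. Note, however, that the paper does not supply its own proof of this statement: it is recorded as a \emph{Fact} with a citation to \cite[Lemma~6]{PTW}, so there is no in-paper proof to compare against. One minor wording issue: when you write ``this sublevel set is contained in $A_{N-1}$, which is finite by hypothesis~(1)'', the antecedent of ``which'' reads as $A_{N-1}$, but $A_{N-1}$ itself need not be finite; what is finite is $\{a\in A_{N-1}\mid h_\g(a)<C\}$, by the $\g$-(N) property. The intended meaning is clear and the logic is sound, but tightening that sentence would remove any ambiguity.
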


\begin{Fact}[{\cite[Proposition 2.3]{OS}}]\label{weak North} 
Let $\g\in\R$ and $A\subset\QB$. 
If $A$ has $\g${\rm -(B)}, then $A\setminus\{\t{root of unity}\}$ also has $\d${\rm -(N)} for all $\d>\g$. 
\end{Fact}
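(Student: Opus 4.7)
The plan is to extract a uniform positive lower bound on $h_\g$ from the $\g$-(B) hypothesis, and then to exploit $\d>\g$ to convert any upper bound on $h_\d$ into a bound on the degree, so that the classical Northcott theorem closes the argument.

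First, I would use $\g$-(B) to find $C_0>0$ and a finite set $B\subset A$ such that every $a\in A\setminus B$ with $h_\g(a)>0$ satisfies $h_\g(a)\geq C_0$. Since $h_\g$ vanishes exactly at $0$ and at roots of unity, this lower bound applies to every $a\in(A\setminus\{\t{root of unity}\})\setminus(B\cup\{0\})$. Next, fix $\d>\g$ and $C>0$, and consider such an $a$ with $h_\d(a)<C$. Dividing $h_\d(a)=\deg(a)^\d h(a)$ by $h_\g(a)=\deg(a)^\g h(a)\geq C_0$ yields
\[
\deg(a)^{\d-\g}\;=\;\f{h_\d(a)}{h_\g(a)}\;<\;\f{C}{C_0},
\]
which bounds $\deg(a)$ from above because $\d-\g>0$. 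The identity $h(a)=h_\d(a)\deg(a)^{-\d}$ together with $h_\d(a)<C$ then also bounds the absolute Weil height $h(a)$ (using the degree bound to absorb the sign of $\d$). The classical Northcott theorem leaves only finitely many such $a$, and adjoining the finite set $B\cup\{0\}$ preserves finiteness. Hence $A\setminus\{\t{root of unity}\}$ has $\d$-(N).

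The argument has no real obstacle; it is essentially a direct comparison between the two weighted heights, with the slack $\d-\g>0$ doing all the work. The only bookkeeping concerns the finitely many points where the $\g$-(B) lower bound $C_0$ is not yet active, together with the isolated point $a=0$, neither of which can disturb finiteness.
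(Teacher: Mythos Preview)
Your argument is correct: the $\g$-(B) hypothesis furnishes a constant $C_0>0$ and a finite exceptional set, the ratio $h_\d/h_\g=\deg(\cdot)^{\d-\g}$ then bounds the degree, and the classical Northcott theorem finishes. The bookkeeping around $0$, roots of unity, and the finite exceptional set $B$ is handled cleanly.

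There is, however, nothing to compare against in this paper: the statement is recorded as a \emph{Fact} with a citation to \cite[Proposition 2.3]{OS} and is not proved here. So your write-up stands on its own as a complete proof rather than as a variant of an argument the paper supplies.
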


Throughout the rest of the paper, we denote by $\N$ the set of positive integers. 

\begin{Lemma}\label{Amo} 
The field $K:=\Q( 2^{1/3^j} \mid j\in\N )$ satisfies the equality $I_N(K)=(1,\infty)$. 
\end{Lemma}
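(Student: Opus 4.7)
The plan is to prove both containments in $I_N(K)=(1,\infty)$ directly.

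First, for the containment $(1,\infty)\subset I_N(K)$, I would invoke the fact recalled in Remark~\ref{nontrivial} (via \cite[Remark 2.6]{OS}) that $\QB\setminus\{\t{root of unity}\}$ has $\g$-(N) for every $\g>1$. Since each generator $2^{1/3^j}$ lies in $\R$, the field $K$ is real, so the only roots of unity it contains are $\pm 1$. Consequently $K\setminus\{\pm 1\}\subset\QB\setminus\{\t{root of unity}\}$ has $\g$-(N) for all $\g>1$ as a subset, and adjoining the two finite extra elements $\pm 1$ does not spoil $\g$-(N). Hence $(1,\infty)\subset I_N(K)$.

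For the reverse inclusion the idea is to exhibit an explicit infinite sequence in $K$ whose $\g$-weighted heights stay bounded for every $\g\le 1$. Set $a_j:=2^{1/3^j}\in K$. Eisenstein's criterion at the prime $2$ applied to $X^{3^j}-2\in\Z[X]$ yields $\deg(a_j)=3^j$, so $h(a_j)=3^{-j}\log 2$, and therefore
\[
h_\g(a_j)=3^{j(\g-1)}\log 2\le\log 2\quad\t{for every }\g\le 1.
\]
Thus the set $\{a\in K\mid h_\g(a)<2\log 2\}$ is infinite, $K$ fails $\g$-(N), and so $\g\notin I_N(K)$ for every $\g\le 1$.

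Combining the two directions (and using that, since $K$ contains only finitely many roots of unity, $I_N(K)$ is indeed an interval of the form described in Remark~\ref{interval}) gives $I_N(K)=(1,\infty)$. No step of this argument is a serious obstacle: the deep ingredient, Dobrowolski's inequality, has already been packaged into the cited statement about $\QB\setminus\{\t{root of unity}\}$, and the only remaining work is the routine Eisenstein computation plus the observation that $K\subset\R$.
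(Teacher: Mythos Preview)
Your proof is correct, but the route for the inclusion $(1,\infty)\subset I_N(K)$ differs from the paper's. The paper invokes Amoroso's theorem \cite[Theorem 1.3]{Amo} to show that this specific field $K$ has $1$-(B), and then applies Fact~\ref{weak North} to pass from $1$-(B) to $\d$-(N) for all $\d>1$. You instead bypass Amoroso's result entirely by citing the Dobrowolski-based fact (already recorded in Remark~\ref{nontrivial}) that $\QB\setminus\{\t{root of unity}\}$ has $\g$-(N) for every $\g>1$, together with the observation that $K\subset\R$ so that $K$ contains only the two roots of unity $\pm1$. Your argument for the reverse inclusion, via the explicit sequence $a_j=2^{1/3^j}$, is essentially identical to the paper's.

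What each approach buys: the paper's route actually establishes the stronger statement that $K$ has $1$-(B), whereas your route does not (Dobrowolski gives $\g$-(N) only for $\g>1$, not $1$-(B)); on the other hand, your argument is more elementary in that it relies only on the classical Dobrowolski inequality and the trivial remark $K\subset\R$, rather than on Amoroso's more recent and more specialized result. Since the lemma only asks for $I_N(K)=(1,\infty)$, your shortcut is perfectly adequate here.
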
 

\begin{proof} 
By Fact \ref{weak North}, it is sufficient to prove that $K$ has $1$-(B) but not $1$-(N). 
By \cite[Theorem 1.3]{Amo}, $K$ has $1$-(B) . 
On the one hand, since we have the inequality $h_1(2^{1/3^j})\leq\log(2)$ for all $j\in\N$, we know that $K$ does not have $1$-(N). 
\end{proof}

\section{Proof of Theorem \ref{implicit}} 

In this section, we prove Theorem \ref{implicit}. 
In fact, we prove the following more explicit one.

\begin{Theorem}\label{explicit} 
Let $\g\in[0,1)$, $c\in\R_{>0}$, and $K:=\Q(2^{1/3^j} \mid j\in\N)$. 

\begin{parts}
\Part{\rm (A)}\hypertarget{main'1}{} 
We set 
\[ 
f(x):=\f{x\log(x)}{2(x-1)}+cx. 
\] 
Take strictly increasing sequences of prime numbers $(d_i)_{i\in\N}$, $(p_i)_{i\in\N}$, and $(q_i)_{i\in\N}$ satisfying the inequalities $\min\{d_1,p_1\}>3$, 
\begin{align*} 
& \exp\l(f(d_i)\r)\leq p_i<q_i<2p_i \leq 4\exp\l(f(d_i)\r), \t{ and} \\
& \max\l\{ d_i, 4f(d_i) \r\}<\exp\l(f(d_{i+1})\r) 
\end{align*} 
for all $i\in\N$. 
We set $L:=K((p_i/q_i)^{1/p_i} \mid i\in\N)$. 
Then $L/K$ satisfies the conditions {\rm (\hyperlink{main2}{2})} in Theorem \ref{implicit} of the case $\g=0$.

\Part{\rm (B)}\hypertarget{main'2}{} 
We let $f(x)$ be $1/\log(x)$, $c$, or $\log(x)$. 
Take strictly increasing sequences of prime numbers $(d_i)_{i\in\N}$, $(p_i)_{i\in\N}$, and $(q_i)_{i\in\N}$ satisfying the inequalities $\min\{d_1,p_1\}>3$, 
\begin{align*} 
& \f{d_i^\g\log(d_i)}{2(d_i-1)}<f(d_i), \\
& \exp(f(d_i)d_i^{1-\g})\leq p_i<q_i<2p_i \leq 4\exp(f(d_i)d_i^{1-\g}), \t{ and} \\
& \max\{ d_i, 4\exp(f(d_i)d_i^{1-\g}) \}<\exp(f(d_{i+1})d_{i+1}^{1-\g})
\end{align*} 
for all $i\in\N$. 
We set $L:=K((p_i/q_i)^{1/d_i} \mid i\in\N)$. 
\begin{parts} 
\Part{(1)} 
If $f(x)=1/\log(x)$, then $L/K$ satisfies the conditions {\rm (\hyperlink{main1}{1})} in Theorem \ref{implicit}. 
\Part{(2)} 
If $f(x)=c$, then $L/K$ satisfies the conditions {\rm (\hyperlink{main2}{2})} in Theorem \ref{implicit} of the cases $\g\in(0,1)$. 
\Part{(3)} 
If $f(x)=\log(x)$, then $L/K$ satisfies the conditions {\rm (\hyperlink{main3}{3})} in Theorem \ref{implicit} of the case $\g\in(0,1)$. 
\end{parts}

\Part{\rm (C)}\hypertarget{main'3}{} 
Take strictly increasing sequences of prime numbers $(p_i)_{i\in\N}$ and $(q_i)_{i\in\N}$ satisfying the inequalities $p_1>3$ and 
\[ 
p_i<q_i<2p_i<p_{i+1}
\] 
for all $i\in\N$. 
We set $L:=K((p_i/q_i)^{1/p_i} \mid i\in\N)$. 
Then $L/K$ satisfies the conditions {\rm (\hyperlink{main3}{3})} in Theorem \ref{implicit} of the case $\g=1$. 
\end{parts} 
\end{Theorem}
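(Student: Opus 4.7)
The plan is to work through the ascending filtrations $K_i := K(\alpha_1,\ldots,\alpha_i)$ and $F_m^i := \Q(2^{1/3^j}: 1\le j\le m)(\alpha_1,\ldots,\alpha_i)$, decompose
\[
L\setminus K \;=\; \bigsqcup_{i\geq 1}(K_i\setminus K_{i-1}),
\]
and reduce the problem to a layerwise analysis. Since $K(\alpha_n)=K((p_nq_n^{d_n-1})^{1/d_n})$, each $F_m^i$ has the pure-radical form required by Corollary~\ref{Lower bound}, and the strictly-increasing prime sequences ensure that $p_i$ and $q_i$ are coprime to all earlier ``$e_nr_n$''. Applying the corollary to $F_m^i/F_m^{i-1}$, uniformly in $m$, gives
\[
h(a) \;>\; \frac{\log p_i}{d_i} - \frac{\log d_i}{2(d_i-1)} \qquad (a\in K_i\setminus K_{i-1}),
\]
and the tower law for the prime-degree extension $F_m^i/F_m^{i-1}$ forces $\deg(a)\ge d_i$.

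Multiplying by $d_i^\gamma$ and inserting the assumed lower bound on $\log p_i$ produces, up to $o(1)$ as $i\to\infty$, the threshold quantity on each layer: $h(a)>c$ in case~(A); $h_\gamma(a)>f(d_i)-d_i^\gamma\log d_i/(2(d_i-1))$ in case~(B); and $h_1(a)>\log p_i\cdot(p_i-2)/(2(p_i-1))$ in case~(C). The error is $o(1)$ because $\gamma<1$ in (A) and (B) (and trivially controlled in (C)). The matching \emph{upper} bound comes from $\alpha_i$ itself: $\deg(\alpha_i)=d_i$, $h(\alpha_i)=(\log q_i)/d_i$, and the constraint $q_i<4\exp(f(d_i)d_i^{1-\gamma})$ yield $h_\gamma(\alpha_i)\le f(d_i)+O(d_i^{\gamma-1})$.

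The three cases then follow by matching $f$. In (A) and (B)(2), the lower bound confines $\{a\in L\setminus K:h_\gamma(a)<c'\}$ to finitely many layers whenever $c'<c$; each contributing layer is finite either because $h>c$ is incompatible with $h_0<c'$ (case (A)) or, for (B)(2), because combining the bound on $h$ with $h_\gamma<c'$ yields explicit bounds on both $\deg(a)$ and $h(a)$, after which Northcott's theorem in $\QB$ applies. Together with $h_\gamma(\alpha_i)\to c$ this gives $\mathrm{Nor}_\gamma(L/K)=c$, the $\gamma$-(B) property, and failure of $\gamma$-(N). In (B)(3) and (C) the layerwise bound already tends to infinity, yielding $\gamma$-(N) directly. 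In (B)(1), $f(d_i)\to 0$ forces $\gamma$-(B) to fail through $h_\gamma(\alpha_i)\to 0$, while for any $\delta>\gamma$ the improved layerwise bound $h_\delta(a)>d_i^{\delta-\gamma}f(d_i)-O(d_i^{\delta-1}\log d_i)\to\infty$ supplies $\delta$-(N). The left endpoint of $I_B$ in (A) is handled by the family $\alpha_1\cdot 2^{1/3^j}\in L\setminus K$, whose degree grows while its height stays bounded, so $h_\delta\to 0$ for every $\delta<0$; the analogous endpoints in (B)(2),(B)(3),(C) are handled by $\alpha_i$ itself. Finally, $I_N(K)=(1,\infty)$ is Lemma~\ref{Amo}.

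The main technical obstacle is making the bound of Corollary~\ref{Lower bound} uniform across all finite subextensions $F_m^{i-1}\subset K_{i-1}$: one must verify, independently of $m$, that $p_i$ and $q_i$ remain unramified in $F_m^{i-1}$ and ramify totally in $F_m^i$. Both follow from the coprimality built into the strictly-increasing prime sequences, via the references cited in the proof of Corollary~\ref{Lower bound} and Eisenstein's criterion applied to $X^{d_i}-p_iq_i^{d_i-1}$ over $F_m^{i-1}$.
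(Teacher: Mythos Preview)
Your approach matches the paper's: the same filtration $K_i$ with finite approximations $F_m^i$ (the paper's $L_{(i,j)}$), the same layerwise application of Corollary~\ref{Lower bound} to obtain the uniform lower bound on $h$, the same degree bound $\deg(a)\ge d_i$ from primality of $[F_m^i:F_m^{i-1}]$, and the same upper bound via $h_\gamma(\alpha_i)$. The only minor difference is in packaging: the paper computes $\mathrm{Nor}_\delta(L/K)$ by invoking Fact~\ref{Northnumber} after showing each $L_i\setminus K$ has $0$-(B) (hence $\delta$-(N) for all $\delta>0$ by Fact~\ref{weak North}), whereas you handle the finiteness of each contributing layer directly via Northcott's theorem through $\deg(a)^\gamma < c'/C_i$; the two routes are equivalent.
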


\begin{Remark} 
We can take the sequences of prime numbers in Theorem \ref{explicit} by the Bertrand–Chebyshev theorem and by letting $d_i$ be large enough. 
\end{Remark}

\begin{proof}[Proof of Thoerem \ref{explicit}] 
By Lemma \ref{Amo}, we know that the equality $I_N(K)=(1,\infty)$ holds. 
We set 
\begin{align*} 
L_0&:=K, \\ 
L_i&:=L_0((p_m/q_m)^{1/d_m} \mid 1\leq m\leq i), \\ 
L_{(0,j)}&:=\Q(2^{1/2^j}), \t{ and} \\ 
L_{(i,j)}&:=L_{(0,j)}((p_m/q_m)^{1/d_m} \mid 1\leq m\leq i) 
\end{align*} 
for each $i\in\N$ and $j\in\Z_{\geq0}$ (we consider $(d_i)_{i\in\N}$ to be $(p_i)_{i\in\N}$ in (\hyperlink{main'3}{3})). 
Note that $L=\bigcup_{i\in\Z_{\geq0}}L_i$ and $L_i=\bigcup_{j\in\Z_{\geq0}}L_{(i,j)}$ hold for each $i\in\Z_{\geq0}$. 

\begin{parts} 
\Part{(\hyperlink{main'1}{A})} 
Take any $a\in L\setminus K$. 
We set $i_0:=\min\l\{ i\in\N \mid a\in L_i \r\}$. 
By the assumption that $a\notin K$, we know that $i_0\geq 1$. 
Fix $j_0\in\N$ such that $a\in L_{(i_0,j_0)}$. 
By the definition of $i_0$, we know that $a\in L_{(i_0,j_0)}\setminus L_{(i_0-1,j_0)}$. 
Applying Corollary \ref{Lower bound} to the extension $L_{(i_0,j_0)}/L_{(i_0-1,j_0)}$, we get the inequalities 
\begin{align*} 
h(a) 
&> \f{\log(p_{i_0})}{d_{i_0}}-\f{\log(d_{i_0})}{2(d_{i_0}-1)} \\ 
&\geq \f{f(d_{i_0})}{d_{i_0}}-\f{\log(d_{i_0})}{2(d_{i_0}-1)} \\ 
&= \f{1}{d_{i_0}}\l(\f{d_{i_0}\log(d_{i_0})}{2(d_{i_0}-1)}+cd_{i_0}\r)-\f{\log(d_{i_0})}{2(d_{i_0}-1)} \\
&= c. 
\end{align*} 
This implies that $\t{Nor}_0(L/K)\geq c$. 
On the one hand, they hold that $(p_i/q_i)^{1/d_i}\notin K$ for all $i\in\N$ and that 
\[ 
h((p_i/q_i)^{1/d_i}) 
= \f{\log(q_i)}{d_i} 
\leq \f{\log(4)}{d_i}+\f{\log(d_i)}{2(d_i-1)}+c 
\rightarrow c
\] 
as $i\rightarrow\infty$. 
These imply that $\t{Nor}_0(L/K)\leq c$. 
Therefore we conclude that $L/K$ is $0$-(RB) with ${\rm Nor}_0(L/K)=c$. 
By Fact \ref{weak North}, the assertion follows.

\Part{(\hyperlink{main'2}{B})} 
We will apply Fact \ref{Northnumber} to $L\setminus K=\bigcup_{i\in\N}(L_i\setminus K)$. 
First, we prove that $L_i\setminus K$ has $\g$-(N) for all $i\in\N$. 
Take any $a\in L_i\setminus K$. 
We set $i_0:=\min\l\{ m\in\{0,1.\ldots,i\} \mid a\in L_m \r\}$. 
By the same discussion as (\hyperlink{main'1}{1}), we get the inequalities 
\begin{align*} 
h(a) 
&> \f{\log(p_{i_0})}{d_{i_0}}-\f{\log(d_{i_0})}{2(d_{i_0}-1)} \\ 
&\geq \min_{1\leq m\leq i}\l\{\f{\log(p_m)}{d_m}-\f{\log(d_m)}{2(d_m-1)}\r\} \\ 
&\geq \min_{1\leq m\leq i}\l\{f(d_m)d_m^{-\g}-\f{\log(d_m)}{2(d_m-1)}\r\} \\
&= \min_{1\leq m\leq i}\l\{d_m^{-\g}\l(f(d_m)-\f{d_m^\g\log(d_m)}{2(d_m-1)}\r)\r\} 
=: C_i. 
\end{align*} 
Note that $C_i$ is a positive constant depending only on $i$. 
Therefore the above inequalities imply that $L_i\setminus K$ has $0$-(B). 
By Fact \ref{weak North}, we know that $L_i\setminus K$ has $\g$-(N). 
Now take any $\d\in(0,1)$. 
We also take any $b\in(L_i\setminus K)\setminus(L_{i-1}\setminus K)=(L_i\setminus L_{i-1})\setminus K$. 
Again fix $j_1\in\N$ such that $b\in L_{(i,j_1)}$. 
Since $[L_{(i,j_1)}:L_{(i-1,j_1)}]=d_i$ (recall the proof of Corollary \ref{Lower bound}) is a prime number and $b\in L_{(i,j_1)}\setminus L_{(i-1,j_1)}$, we know that $L_{(i-1,j_1)}(b)=L_{(i,j_1)}$. 
Thus we have the inequality 
\[ 
\deg(b)\geq d_i. 
\label{3.1}\tag{3.1} 
\] 
Applying Corollary \ref{Lower bound} to the extension $L_{(i,j_1)}/L_{(i-1,j_1)}$, we get the inequalities 
\begin{align*} 
h_\d(b) 
&> d_i^\d\l(\f{\log(p_i)}{d_i}-\f{\log(d_i)}{2(d_i-1)}\r) && \t{by (\ref{3.1})} \\ 
&\geq d_i^\d\l(f(d_i)d_i^{-\g}-\f{\log(d_i)}{2(d_i-1)}\r) \\
&= d_i^{\d-\g}\l(f(d_i)-\f{d_i^\g\log(d_i)}{2(d_i-1)}\r) \\
&\rightarrow 
\begin{cases} 
0 & (\d<\g) \\
0 & (\d=\g, \, f(x)=1/\log(x)) \\
c & (\d=\g, \, f(x)=c) \\
\infty & (\d=\g, \, f(x)=\log(x)) \\
\infty & (\d>\g)
\end{cases} 
\end{align*} 
as $i\rightarrow\infty$. 
By Fact \ref{Northnumber}, we have the inequality 
\[ 
\t{Nor}_\d(L/K) 
\geq 
\begin{cases} 
0 & (\d<\g) \\
0 & (\d=\g, \, f(x)=1/\log(x)) \\
c & (\d=\g, \, f(x)=c) \\
\infty & (\d=\g, \, f(x)=\log(x)) \\
\infty & (\d>\g)
\end{cases} . 
\label{3.2}\tag{3.2} 
\] 
On the one hand, they hold that $(p_i/q_i)^{1/d_i}\notin K$ for all $i\in\N$ and that 
\begin{align*} 
h_\d((p_i/q_i)^{1/d_i}) 
= \f{\log(q_i)}{d_i^{1-\d}} 
&\leq \f{\log(4)}{d_i^{1-\d}}+\f{f(d_i)}{d_i^{\g-\d}} \\
&\rightarrow 
\begin{cases} 
0 & (\d<\g) \\
0 & (\d=\g, \, f(x)=1/\log(x)) \\
c & (\d=\g, \, f(x)=c) \\
\infty & (\d=\g, \, f(x)=\log(x)) \\
\infty & (\d>\g) 
\end{cases} 
\end{align*} 
as $i\rightarrow\infty$. 
These imply the inequality 
\[ 
\t{Nor}_\d(L/K) 
\leq 
\begin{cases} 
0 & (\d<\g) \\
0 & (\d=\g, \, f(x)=1/\log(x)) \\
c & (\d=\g, \, f(x)=c) \\
\infty & (\d=\g, \, f(x)=\log(x)) \\
\infty & (\d>\g) 
\end{cases}. 
\label{3.3}\tag{3.3} 
\] 
Therefore the assertions follow from the inequalities (\ref{3.2}) and (\ref{3.3}).

\Part{(\hyperlink{main'3}{C})} 
We can samely prove the assertion as (\hyperlink{main'2}{2}). 

\end{parts} 

Thus we have completes all the proof. 
\end{proof}

\begin{Remark} 
By Lemma \ref{ramify}, we may replace the field $K$ in Theorem \ref{explicit} with such a field $K'$ that  $I_N(K')=(1,\infty)$ and all $p_i$ and $q_i$ do not ramify in any number field included in $K'$. 
\end{Remark}

\section{Comments on non-positive weights}\label{negative} 
As we mentioned in Section \ref{Intro}, we prove the following proposition regarding the cases of non-positive weights. 

\begin{Proposition}\label{nonsense} 
Let $\g\in\R$. 
\begin{parts} 
\Part{(1)} 
If $\g<0$, then any $\g${\rm -(RB)} extension is trivial. 
\Part{(2)} 
If $\g\leq0$, then any $\g${\rm -(RN)} extension is trivial. 
\end{parts} 
\end{Proposition}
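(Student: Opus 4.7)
The plan is to reduce both parts to the claim that if $L \supsetneq K$ and $L \setminus K$ has $\gamma$-(B) (resp.\ $\gamma$-(N)), then $K$ also has $\gamma$-(B) (resp.\ $\gamma$-(N)). Once this is established, the decomposition $L = K \sqcup (L \setminus K)$ shows that $L$ inherits the property (both (B) and (N) are stable under finite disjoint unions), so the extension is trivial.

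I would argue this claim by contrapositive. Suppose $K$ fails the property, so there exist $C > 0$ and a sequence of distinct $\beta_n \in K$ with $h_\g(\beta_n) < C$, and moreover $h_\g(\beta_n) > 0$ in the (B) case. The idea is to translate these into elements of $L \setminus K$ of comparable $h_\g$. Fix any $\alpha \in L \setminus K$ with $h(\alpha) > 0$ (e.g.\ take $\alpha := 2\alpha_0$ for some $\alpha_0 \in L \setminus K$) and write $d := \deg(\alpha)$. For part (2) set $b_n := \alpha + \beta_n$: this lies in $L \setminus K$ (else $\alpha = b_n - \beta_n \in K$), the $b_n$ are pairwise distinct, and the relation $\beta_n = b_n - \alpha \in \Q(b_n, \alpha)$ together with $[\Q(b_n, \alpha) : \Q(b_n)] \leq d$ yields $\deg(b_n) \geq \deg(\beta_n)/d$. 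Using $\deg(\beta_n)^\g \leq 1$ (since $\g \leq 0$) and $h(b_n) \leq h(\alpha) + h(\beta_n) + \log 2$, a short computation gives
\[
h_\g(b_n) \leq d^{|\g|}\bigl(h(\alpha) + \log 2 + h_\g(\beta_n)\bigr) < d^{|\g|}\bigl(h(\alpha) + \log 2 + C\bigr),
\]
uniformly in $n$, contradicting $\g$-(N) of $L \setminus K$.

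For part (1) the same translation idea almost works, but the expected main obstacle appears: I need $h_\g(b_n) > 0$, equivalently $b_n$ neither zero nor a root of unity. The plan is to use the elementary geometric fact that a real affine line in $\C$ meets the unit circle in at most two points. Consequently, for each $n$, the set of $c \in \Q$ making $c\alpha + \beta_n$ equal to $0$ or a root of unity has at most three elements, so some $c_n \in \{1, 2, 3, 4\}$ leaves $b_n := c_n\alpha + \beta_n$ neither zero nor a root of unity. Replacing $\alpha$ by $c_n\alpha$ in the estimates (an extra factor $h(c_n) \leq \log 4$) preserves the boundedness of $h_\g(b_n)$, and the $b_n$ stay distinct because $c_n\alpha + \beta_n = c_m\alpha + \beta_m$ with $c_n \neq c_m$ would force $\alpha \in K$. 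This contradicts $\g$-(B) of $L \setminus K$ and closes the argument.
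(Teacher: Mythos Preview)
Your argument for part~(2) is correct and parallels the paper's (the paper multiplies by a fixed element of $L\setminus K$ rather than adding, but the degree and height estimates go through the same way).

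Part~(1), however, has a genuine gap. To show that $L\setminus K$ fails $\gamma$-(B) you must produce, for \emph{every} $C'>0$, infinitely many elements with $0<h_\gamma<C'$; exhibiting a single sequence with $h_\gamma(b_n)$ bounded by one fixed constant only contradicts $\gamma$-(N). Your estimate
\[
h_\gamma(b_n)\le d^{|\gamma|}\bigl(h(\alpha)+\log 8+h_\gamma(\beta_n)\bigr)
\]
is bounded below by $d^{|\gamma|}(h(\alpha)+\log 8)>0$ regardless of how small $h_\gamma(\beta_n)$ is, so it cannot reach arbitrarily small values. The faulty step is replacing $\deg(\beta_n)^\gamma$ by $1$ on the $h(\alpha)+\log 8$ term: you discarded precisely the factor that would have driven the bound to~$0$.

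The missing ingredient, which the paper supplies, is that $\deg(\beta_n)\to\infty$. If $\deg(\beta_{n_i})\le M$ along a subsequence, then $h(\beta_{n_i})\le M^{-\gamma}h_\gamma(\beta_{n_i})\to 0$, and Northcott's theorem forces the $\beta_{n_i}$ to lie in a finite set, a contradiction. With this in hand, keep the factor $\deg(\beta_n)^\gamma$ in place:
\[
h_\gamma(b_n)\le d^{|\gamma|}\Bigl(\deg(\beta_n)^\gamma\bigl(h(\alpha)+\log 8\bigr)+h_\gamma(\beta_n)\Bigr),
\]
and now both summands tend to $0$ (choose $\beta_n$ with $h_\gamma(\beta_n)\to 0$, which failure of $\gamma$-(B) permits, and use $\gamma<0$). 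This repairs the argument. Your device of selecting $c_n\in\{1,2,3,4\}$ to avoid roots of unity is correct and is a nice additive counterpart to the paper's multiplicative trick of replacing $b$ by~$2b$.
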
 

Proposition \ref{nonsense} is an immediate consequence of the following: 

\begin{Lemma} 
Let $L/K$ be an extension of subfields of $\QB$ and $\g\in\R$. 
\begin{parts} 
\Part{(1)} 
\hypertarget{lem1}{} 
Assume that $\g<0$. 
Then $L/K$ is not $\g${\rm -(RB)} if $K$ does not have $\g${\rm -(B)}. 
\Part{(2)} 
\hypertarget{lem2}{} 
Assume that $\g\leq0$. 
Then $L/K$ is not $\g${\rm -(RN)} if $K$ does not have $\g${\rm -(N)}. 
\end{parts} 
\end{Lemma}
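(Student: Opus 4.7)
The plan is to lift witness sequences from $K$ into $L\setminus K$ by translating. Pick any $b\in L\setminus K$, which is automatically nonzero (I tacitly assume $L\neq K$; otherwise $L\setminus K=\emptyset$ has $\g$-(B) and $\g$-(N) vacuously, so the lemma is to be read as concerning proper extensions, matching its use in Proposition~\ref{nonsense}). For every $a\in K$ and every $n\in\N$, the element $a+nb$ lies in $L\setminus K$, and this translation is my vehicle for transferring smallness of $h_\g$ from $K$ to $L\setminus K$.

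Next I would extract from the failure of $\g$-(B) (resp.\ $\g$-(N)) in $K$ distinct $a_i\in K$ with $0<h_\g(a_i)\to 0$ (resp.\ $h_\g(a_i)<C$ for some fixed $C>0$). Since $\g\leq 0$, Northcott's theorem forces $\deg(a_i)\to\infty$: a uniform degree bound would bound $h(a_i)=\deg(a_i)^{-\g}h_\g(a_i)$, leaving only finitely many $a_i$. From $\Q(a_i)\subset\Q(a_i,b)=\Q(a_i+nb,b)$ I read off the degree lower bound $\deg(a_i+nb)\geq\deg(a_i)/\deg(b)$, and combining with the subadditivity $h(a_i+nb)\leq h(a_i)+h(b)+\log(2n)$ together with $\g\leq 0$ yields an estimate of the shape
\[
h_\g(a_i+nb)\leq\deg(b)^{-\g}\bigl(h_\g(a_i)+\deg(a_i)^{\g}(h(b)+\log(2n))\bigr),
\]
whose right-hand side tends to $0$ in the setting of part (\hyperlink{lem1}{1}) and stays uniformly bounded in the setting of part (\hyperlink{lem2}{2}).

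For part (\hyperlink{lem2}{2}) the distinct shifts $a_i+b\in L\setminus K$ then already witness the failure of $\g$-(N) in $L\setminus K$, so the argument concludes there. For part (\hyperlink{lem1}{1}) one additionally needs $h_\g(a_i+nb)>0$, so $a_i+nb$ must avoid both $0$ and the roots of unity. The value $0$ is ruled out immediately since $a_i+nb=0$ would force $b\in K$. Dodging the roots of unity is the main obstacle, and I would handle it by the elementary observation that three complex numbers $\zeta,\zeta+b,\zeta+2b$ on the unit circle force $b=0$ (subtracting $|\zeta+b|^2=1$ and $|\zeta+2b|^2=1$ after using $|\zeta|=1$ gives $|b|^2=0$ in one line). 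Hence at most two of $a_i+b,\,a_i+2b,\,a_i+3b$ are roots of unity, and a pigeonhole argument extracts $n_0\in\{1,2,3\}$ for which $a_i+n_0 b$ avoids the roots of unity for infinitely many $i$; these are the required witnesses. The three-term arithmetic-progression argument on the unit circle is the only genuinely new ingredient beyond elementary degree-and-height bookkeeping.
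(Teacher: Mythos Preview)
Your argument is correct and follows the same overall strategy as the paper: take witnesses $(a_i)$ in $K$ to the failure of $\g$-(B) (resp.\ $\g$-(N)), translate them by a fixed $b\in L\setminus K$, and use Northcott together with $\g\le 0$ to force $\deg(a_i)\to\infty$ so that the extra $h(b)$-contribution dies off after weighting.

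The one genuine difference is the choice of translation. The paper multiplies, working with $ba_n$, whereas you add, working with $a_i+nb$. Multiplication makes two steps cleaner: the height bound is simply $h(ba_n)\le h(b)+h(a_n)$ with no $\log(2n)$ term, and the root-of-unity avoidance is a one-line algebraic trick---replace $b$ by $2b$ if necessary, since if both $ba_n$ and $2ba_n$ were roots of unity for cofinitely many $n$, their ratio $2$ would be as well. Your additive route requires the slightly longer (but still elementary) observation that no three-term arithmetic progression with nonzero step lies on the unit circle, together with a pigeonhole to stabilise $n_0\in\{1,2,3\}$. In exchange, your additive translation never needs $a_i\neq 0$ to guarantee $a_i+nb\in L\setminus K$, which is a minor convenience in part~(2). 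Both implementations are short and valid; the paper's is marginally slicker.
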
 

\begin{proof} 
\ 

\begin{parts} 
\Part{(\hyperlink{lem1}{1})} 
Since $K$ does not have $\g$-(B), there exists a pairwise distinct sequence $(a_n)_{n\in\N}\subset K$ such that $0<h_\g(a_n)\rightarrow0$ as $n\rightarrow\infty$. 
First, we prove that $\lim_{n\rightarrow\infty}\deg(a_n)=\infty$ by contradiction. 
Suppose that there exist $M>0$ and strictly increasing sequence of positive integers $(n_i)_{i\in\N}$ such that $\deg(a_{n_i})\leq M$ for all $i\in\N$. 
Then we have 
\[ 
h(a_{n_i})\leq \l(\f{M}{\deg(a_{n_i})}\r)^{-\g}h(a_{n_i})=M^{-\g}h_\g(a_{n_i})\rightarrow0 
\] 
as $i\rightarrow\infty$. 
Thus $a_{n_i}\in\QB$ is bounded degree and bounded height for all $i\in\N$. 
This contradicts the Northcott theorem (see, \eg, \cite[p.25, Theorem 1.6.8]{BG}). 
Now fix $b\in L\setminus K$. 
Replacing $b$ with $2b$ if needed, we may assume that $ba_n$ is not a root of unity for infinitely many $n\in\N$. 
Note that we have the inequalities 
\[ 
\deg(a_n) 
\leq [\Q(b,ba_n):\Q] 
\leq \deg(b)\deg(ba_n). 
\] 
Thus we get the inequalities 
\begin{align*} 
0 < h_\g(ba_n) 
&\leq \l(\f{\deg(a_n)}{\deg(b)}\r)^\g h(ba_n) \\
&\leq \l(\f{\deg(a_n)}{\deg(b)}\r)^\g(h(b)+h(a_n)) \\
&= \f{1}{\deg(b)^\g}\l(\f{h(b)}{\deg(a_n)^{-\g}}+h_\g(a_n)\r)
\end{align*} 
for infinitely many $n\in\N$. 
Since $(ba_n)_{n\in\N}$ is a pairwise distinct sequence in $L\setminus K$, we conclude that the set $L\setminus K$ does not have $\g$-(B) by letting $n\rightarrow\infty$ in the above inequalities. 

\Part{(\hyperlink{lem2}{2})} 
We can prove the assertion in a similar way to (\hyperlink{lem1}{1}). 

\end{parts} 

These complete the proof. 
\end{proof}

\begin{acknowledgement} 
The work was supported until March 2022 by JST SPRING, Grant Number JPMJSP2136. 
\end{acknowledgement}

\end{document}